\documentclass[12pt,reqno,a4]{article}

\usepackage{amsfonts}
\usepackage{amsmath}
\usepackage{amssymb}
\usepackage{amsthm}
\usepackage{enumitem}
\usepackage[pdfstartview=FitV,bookmarks=false]{hyperref}
\usepackage[notref,notcite,final]{showkeys}
\usepackage[notext]{kpfonts}
\usepackage[onlytext]{MinionPro}

\def\blfootnote{\xdef\@thefnmark{}\@footnotetext}
\long\def\symbolfootnote[#1]#2{\begingroup\def\thefootnote{\fnsymbol{footnote}}\footnote[#1]{#2}\endgroup}

\DeclareMathOperator{\Arg}{Arg}
\DeclareMathOperator{\ef}{e}
\DeclareMathOperator{\hf}{h}

\DeclareMathOperator{\Img}{Im}
\DeclareMathOperator{\Log}{Log}
\DeclareMathOperator{\Rl}{Re}

\newcommand{\lt}[1]{\mathcal{L}^{#1}}
\newcommand{\reg}[1]{\mathcal{R}^{#1}}

\newcommand{\crd}[1]{\mathrm{C}_{\mathrm{rd}}^{#1}}
\newcommand{\creg}[1]{\mathcal{R}_{\mathrm{c}}^{#1}}
\newcommand{\nf}{\mathcal{N}}
\newcommand{\C}{\mathbb{C}}
\newcommand{\N}{\mathbb{N}}
\newcommand{\R}{\mathbb{R}}
\newcommand{\T}{\mathbb{T}}
\newcommand{\Z}{\mathbb{Z}}

\newtheorem{theorem}{Theorem}
\newtheorem{definition}{Definition}
\newtheorem{lemma}{Lemma}

\theoremstyle{definition}
\newtheorem{corollary}{Corollary}

\numberwithin{equation}{section}

\begin{document}
\title{On Uniqueness of the Laplace Transform on Time Scales}
\author{Ba\c{s}ak Karpuz}
\date{ }

\maketitle

\symbolfootnote[1]{The author is on 1-year leave at the University of Calgary, Canada.\newline
\textbf{Current Address}. Department of Statistics and Mathematics, University of Calgary, 2500 University Drive N. W., Calgary, AB T2N 1N4, Canada.\newline
\textbf{Address}. Department of Mathematics, Faculty of Science and Literature, ANS Campus, Afyon Kocatepe University, 03200 Afyonkarahisar, Turkey.\newline
\textbf{Email}. bkarpuz@gmail.com \textbf{Web}. \url{http://www2.aku.edu.tr/\string~bkarpuz}}

\begin{abstract}
After introducing the concept of null functions, we shall present a uniqueness result in the
sense of the null functions for the Laplace transform on time scales with arbitrary graininess.
The result can be regarded as a dynamic extension of the well-known Lerch's theorem.
\newline\noindent\textbf{Keywords}. Time scales; Laplace transform; Lerch's theorem; Uniqueness.
\newline\noindent\textbf{2000 Mathematics Subject Classification}. 44A10 (34N05).
\end{abstract}

\section{Introduction}\label{intro}

The Laplace transform is one of the fundamental representatives of
integral transformations used in mathematical analysis.
This transform is essentially bijective for the majority of
practical uses. The Laplace transform has the useful property that
many relationships and operations over the originals functions correspond to
simpler relationships and operations over the image functions.
The discrete analogue of the Laplace transform is called as $Z$-transform,
which is also converts a sequence of real or complex numbers
into a complex frequency-domain representation.
This transform is also bijective.

The calculus on time scales has been initiated by Hilger (see \cite{MR1066641}) in order to
unify the theories of continuous analysis and of discrete analysis.
The Laplace transform on time scales was introduced by Hilger in \cite{MR1722974},
but in a form that tries to unify the (continuous) Laplace transform
and the (discrete) $Z$-transform.
For arbitrary time scales, the Laplace transform was
introduced by and investigated by Bohner and Peterson in \cite{MR1948468}
(see also \cite[Section~3.10]{MR1843232}).
Let $\sup\T=\infty$, for locally $\Delta$-integrable function $f:[s,\infty)_{\T}\to\C$,
i.e., $\Delta$-integrable over each compact interval of $[s,\infty)_{\T}$, the Laplace transform is defined to be
\begin{equation}
\lt{}\{f\}(z):=\int_{s}^{\infty}f(\eta)\ef_{\ominus{}z}(\sigma(\eta),s)\Delta\eta\quad\text{for}\ z\in\mathcal{D},\notag
\end{equation}
where $\mathcal{D}$ consists of such complex numbers for which the improper integral converges.
In order to determine an explicit region of convergence,
conditions on the class of the determining functions should be provided.
This was done by Davis et al.\ in \cite{MR2324337},
where some restrictions were imposed on the graininess $\mu$.
In a recent paper \cite{bo/gu/ka10}, Bohner et al.\ removed the restriction on the graininess
of the time scale and considered some fundamental properties of the
Laplace transform on time scales.
The readers may be referred to \cite{MR1129464,MR1716143,MR0622023} for the basic properties
of the usual Laplace transform.
For other results about the Laplace transform on time scales,
see \cite{MR2597442,MR2320804,MR2585078,MR2679122}.

The uniqueness property of the Laplace transform and of the $Z$-transform are well-known
(see \cite{MR1129464,MR1716143,MR0622023}), which is a necessary tool for the inverse Laplace transform.
To the best of our knowledge, nothing has been published up to now on the uniqueness of the
Laplace transform on arbitrary time scales.
In this paper, we shall provide a uniqueness result on time scales with arbitrary
graininess for the Laplace transform,
which reduces to the well-known the Lerch's theorem in the continuous case.
Our result on time scales with constant graininess ($\R$ and $\Z$),
gives a unified proof for the uniqueness of the Laplace transform
(the usual Laplace transform and the $Z$-transform).

The paper is organized as follows:
In Section~\ref{al}, we present some results that are required in the proof of
the main result.
In Section~\ref{ult}, we state and prove our main results together with some
necessary definitions.
And in Section~\ref{pts}, as an appendix, we recall a short account concerning
the time scale calculus.

\section{Auxiliary Lemmas}\label{al}

We define the minimal graininess function $\mu_{\ast}:\T\to\R_{0}^{+}$ by
\begin{equation}
\mu_{\ast}(s):=\inf\big\{\mu(t):\ t\in[s,\infty)_{\T}\big\}\quad\text{for}\ s\in\T\notag
\end{equation}
and the set of positively regressive constants $\creg{+}(\T,\C)$ by
\begin{equation}
\creg{+}(\T,\C):=\big\{z\in\C:\ 1+z\mu(t)>0\ \text{for all}\ t\in\T\big\}.\notag
\end{equation}
For $h\in\R_{0}^{+}$ and $\lambda\in\creg{+}(\T,\R)$, we also define the set $\C_{h}(\lambda)$ by
\begin{equation}
\C_{h}(\lambda):=\big\{z\in\C_{h}:\ \Rl_{h}(z)>\lambda\big\}.\notag
\end{equation}

Now, we proceed this section with a result quoted from \cite{bo/gu/ka10}.

\begin{lemma}[See {\protect\cite[Theorem~3.4(iii)]{bo/gu/ka10}}]\label{allm1}
Let $\sup\T=\infty$, $s\in\T$, $\lambda\in\creg{+}([s,\infty)_{\T},\R)$ and $z\in\C_{\mu_{\ast}(s)}(\lambda)$, then
\begin{equation}
\lim_{t\to\infty}\ef_{\lambda\ominus{}z}(t,s)=0.\notag
\end{equation}
\end{lemma}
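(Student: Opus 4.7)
The plan is to reduce the convergence to zero of $\ef_{\lambda\ominus z}(t,s)$ to a comparison with a purely real, positively regressive exponential, and then to establish that the comparison tends to zero by a direct estimate.

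First I would show that the Hilger real part $\Rl_{h}(z)=(|1+hz|-1)/h$ is non-decreasing in $h\ge 0$: a direct computation of $(d/dh)\Rl_{h}(z)$ produces a numerator $|1+hz|-(1+h\Rl(z))$, which is non-negative since $|w|\ge\Rl(w)$. Applied with the hypotheses of the lemma this gives $\Rl_{\mu(\tau)}(z)\ge\Rl_{\mu_{\ast}(s)}(z)>\lambda$ for every $\tau\in[s,\infty)_{\T}$; setting $\delta:=\Rl_{\mu_{\ast}(s)}(z)-\lambda>0$, one has $\Rl_{\mu(\tau)}(z)\ge\lambda+\delta$ throughout.

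Combining the standard identities $\ef_{\lambda\ominus z}=\ef_{\lambda}\ef_{\ominus z}=\ef_{\lambda}/\ef_{z}$ and $|\ef_{z}(t,s)|=\ef_{\Rl_{\mu(\cdot)}(z)}(t,s)$ (the latter subscript denoting the time-dependent function $\tau\mapsto\Rl_{\mu(\tau)}(z)$) with the pointwise monotonicity of $\ef_{\bullet}(t,s)$ in its exponent, I would deduce
\[
|\ef_{\lambda\ominus z}(t,s)|=\frac{\ef_{\lambda}(t,s)}{|\ef_{z}(t,s)|}\le\frac{\ef_{\lambda}(t,s)}{\ef_{\lambda+\delta}(t,s)}=\ef_{\lambda\ominus(\lambda+\delta)}(t,s),
\]
and the problem reduces to showing $\ef_{\lambda+\delta}(t,s)/\ef_{\lambda}(t,s)\to\infty$ as $t\to\infty$.

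This last reduction is where I expect the principal difficulty. The logarithm of the quotient equals $\int_{s}^{t}g(\mu(\tau))\Delta\tau$, where $g(h):=h^{-1}\log\bigl((1+h(\lambda+\delta))/(1+h\lambda)\bigr)$ for $h>0$ and $g(0):=\delta$; this $g$ is strictly positive and continuous but tends to $0$ as $h\to\infty$, so no uniform positive lower bound on the integrand is available. The saving observation is that the contribution of a right-scattered atom $\tau$ to the $\Delta$-integral equals $\mu(\tau)g(\mu(\tau))=\log\bigl((1+\mu(\tau)(\lambda+\delta))/(1+\mu(\tau)\lambda)\bigr)$, which is strictly positive and, for $\mu(\tau)$ bounded away from $0$, bounded below by a fixed positive constant. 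Because $\sup\T=\infty$ forces the total $\Delta$-measure of $[s,t)_{\T}$ to grow without bound, a short case analysis --- according to whether the right-dense part of $[s,\infty)_{\T}$ has infinite Lebesgue measure, whether infinitely many right-scattered atoms of graininess bounded below occur, or whether atoms of shrinking graininess accumulate with divergent total graininess --- yields divergence of the integral in every scenario, and thus completes the proof.
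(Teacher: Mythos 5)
You should note at the outset that the paper does not prove this lemma at all: it is imported verbatim from \cite[Theorem~3.4(iii)]{bo/gu/ka10}, so there is no in-paper argument to compare yours against, and your proposal has to be judged on its own terms. On those terms it is essentially sound, and the route you take (reduction to real exponents via the Hilger real part, then a direct divergence estimate) is the natural one. Your two reductions are both legitimate: the monotonicity of $h\mapsto\Rl_{h}(z)$ is even recorded, without proof, in the paper's appendix, and the identity $|\ef_{z}(t,s)|=\ef_{\Rl_{\mu(\cdot)}(z)}(t,s)$ follows from $\Rl\,\xi_{h}(z)=\xi_{h}(\Rl_{h}(z))$; combined with the positive regressivity of the constant $\lambda+\delta$ (which holds because $1+\mu\lambda>0$ and $\delta>0$) they yield $|\ef_{\lambda\ominus z}(t,s)|\leq\ef_{\lambda\ominus(\lambda+\delta)}(t,s)$ as you claim. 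Your final case analysis is also exhaustive for the right reason: the $\Delta$-measure $t-s$ of $[s,t)_{\T}$ equals the Lebesgue measure of its right-dense part plus the sum of the graininesses of its right-scattered points, so at least one of the three scenarios you list must occur, and in each the integral $\int_{s}^{t}g(\mu(\tau))\Delta\tau$ diverges. Two details deserve to be made explicit. First, you should verify that $z$ is regressive on all of $[s,\infty)_{\T}$ (membership in $\C_{\mu_{\ast}(s)}$ only excludes $z=-1/\mu_{\ast}(s)$); this follows from $|1+\mu(\tau)z|=1+\mu(\tau)\Rl_{\mu(\tau)}(z)\geq1+\mu(\tau)(\lambda+\delta)>0$. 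Second, when you bound $g$ below on atoms of small graininess, the formula for $g(h)$ is only meaningful on $\{h\geq0:\ 1+h\lambda>0\}$ when $\lambda<0$; on that set $g$ is continuous with $g(0)=\delta>0$ and tends to $+\infty$ at the endpoint $-1/\lambda$ if that endpoint is reached, so it is indeed bounded below by a positive constant on the portion of $[0,1]$ that matters, which is all your argument needs.
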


The inclusion $\R^{+}\subset\C_{\mu_{\ast}(s)}(0)$ for any $s\in\T$ yields the following corollary.

\begin{corollary}\label{alcrl1}
Let $\sup\T=\infty$, $s\in\T$ and $x\in\R^{+}$, then
\begin{equation}
\lim_{t\to\infty}\ef_{\ominus{}x}(t,s)=0\quad\text{and}\quad\lim_{t\to\infty}\ef_{x}(t,s)=\infty.\notag
\end{equation}
\end{corollary}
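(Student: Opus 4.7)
The plan is to apply Lemma~\ref{allm1} directly for the first limit, and then obtain the second by inverting via the standard identity $\ef_{x}(t,s)\ef_{\ominus{}x}(t,s)=1$.

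For the first limit, I would choose $\lambda=0$ in Lemma~\ref{allm1}. The constant $0$ is trivially positively regressive on $[s,\infty)_{\T}$ since $1+0\cdot\mu(t)=1>0$. Given any $x\in\R^{+}$, the hypothesis $\R^{+}\subset\C_{\mu_{\ast}(s)}(0)$ places $x$ in $\C_{\mu_{\ast}(s)}(\lambda)$ with $\lambda=0$. Because $0\ominus{}x=\ominus{}x$, Lemma~\ref{allm1} immediately yields $\lim_{t\to\infty}\ef_{\ominus{}x}(t,s)=0$, which is the first assertion.

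For the second limit, I would use the basic identity $\ef_{x}(t,s)\ef_{\ominus{}x}(t,s)=1$, which holds whenever $x$ is regressive. Since $x>0$ ensures $1+x\mu(t)>0$ for every $t\in\T$, both $x$ and $\ominus{}x$ lie in $\creg{+}(\T,\R)$; in particular $\ef_{\ominus{}x}(t,s)>0$ for all $t\in[s,\infty)_{\T}$. Hence $\ef_{x}(t,s)=1/\ef_{\ominus{}x}(t,s)$, and as $\ef_{\ominus{}x}(t,s)\to 0^{+}$ by the first part, we conclude $\ef_{x}(t,s)\to\infty$.

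There is essentially no obstacle: the only thing that might deserve a sentence of justification is the inclusion $\R^{+}\subset\C_{\mu_{\ast}(s)}(0)$ that is asserted right before the corollary. For $\mu_{\ast}(s)=0$ this reduces to the usual real part being positive; for $\mu_{\ast}(s)=h>0$, one has $\Rl_{h}(x)=(|1+hx|-1)/h=x>0$ whenever $x>0$, so the inclusion is automatic. Once this is noted, everything reduces to a one-line application of Lemma~\ref{allm1} plus the reciprocal identity.
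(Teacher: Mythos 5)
Your proposal is correct and follows exactly the route the paper intends: the corollary is deduced from Lemma~\ref{allm1} with $\lambda=0$ via the inclusion $\R^{+}\subset\C_{\mu_{\ast}(s)}(0)$, and the second limit is the reciprocal of the first since $\ef_{\ominus{}x}(\cdot,s)=1/\ef_{x}(\cdot,s)$ is positive for $x\in\creg{+}(\T,\R)$. Your extra verification of the inclusion (computing $\Rl_{h}(x)=x>0$) is a reasonable elaboration of the one-line justification the paper gives.
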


Next, we present a result on asymptotic property of the time scale exponential.

\begin{lemma}\label{allm2}
Let $s,t\in\T$ and $\lambda\in\R^{+}$, then
\begin{equation}
\lim_{x\to\infty}\big[x^{\lambda}\ef_{\ominus{}x}(t,s)\big]=
\begin{cases}
0,&t>s\\
\infty,&t\leq s.
\end{cases}\notag
\end{equation}
\end{lemma}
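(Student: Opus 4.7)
I would split the argument along the sign of $t-s$ and handle the three regimes separately.

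The case $t=s$ is immediate: $\ef_{\ominus{}x}(s,s)=1$, so the expression reduces to $x^{\lambda}\to\infty$. For $t<s$, I would reduce to the $t>s$ case via the identities $\ef_{\ominus{}p}(u,v)=1/\ef_{p}(u,v)$ and $\ef_{p}(u,v)=1/\ef_{p}(v,u)$, which together give $\ef_{\ominus{}x}(t,s)=\ef_{x}(s,t)$. Since $x>0$ is positively regressive and $\ef_{x}(\cdot,t)$ solves $y^{\Delta}=xy$ with $y(t)=1$, it is positive and non-decreasing on $[t,\infty)_{\T}$; hence $\ef_{x}(s,t)\geq 1$ and $x^{\lambda}\ef_{\ominus{}x}(t,s)\geq x^{\lambda}\to\infty$.

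The heart of the proof is the case $t>s$, where I would write $x^{\lambda}\ef_{\ominus{}x}(t,s)=x^{\lambda}/\ef_{x}(t,s)$ and establish that $\ef_{x}(t,s)$ dominates every power of $x$ as $x\to\infty$. Invoking the cylinder-transform representation
\begin{equation}
\ef_{x}(t,s)=\exp\biggl(\int_{s}^{t}\xi_{\mu(\eta)}(x)\,\Delta\eta\biggr),\notag
\end{equation}
with $\xi_{h}(x)=x$ for $h=0$ and $\xi_{h}(x)=h^{-1}\log(1+xh)$ for $h>0$, I would split $[s,t)_{\T}$ into its right-dense part (of Lebesgue measure $m\geq 0$) and its right-scattered atoms $\{\tau_{i}\}$; the integrand contributes $xm$ from the former and $\sum_{i}\log(1+x\mu(\tau_{i}))$ from the latter, yielding the lower estimate
\begin{equation}
\ef_{x}(t,s)\geq\exp(xm)\prod_{i}\bigl(1+x\mu(\tau_{i})\bigr).\notag
\end{equation}
Whenever $m>0$, the exponential factor alone crushes $x^{\lambda}$ for every $\lambda>0$, and the scattered factor supplies additional polynomial reserve.

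The main obstacle will be the purely scattered situation, in which $[s,t)_{\T}$ possesses no right-dense subinterval and the lower bound above is only polynomial in $x$. Handling this regime cleanly---producing an estimate on $\int_{s}^{t}\xi_{\mu(\eta)}(x)\,\Delta\eta$ that outpaces $\lambda\log x$ for every prescribed $\lambda>0$---is where the finer structure of the time scale must be brought to bear, and is the step I expect to demand the most attention.
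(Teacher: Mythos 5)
Your handling of the regimes $t=s$, $t<s$, and $t>s$ with a right-dense part of positive Lebesgue measure is correct, and the cylinder-transform decomposition $\ef_{x}(t,s)=\exp(xm)\prod_{i}(1+x\mu(\tau_{i}))$ is a legitimate and arguably more transparent route than the paper's. But the case you defer, $m=0$, is a genuine gap --- on discrete time scales it is the entire content of the lemma --- and the proof is incomplete without it. The gap splits in two. If $[s,t)_{\T}$ contains infinitely many (necessarily right-scattered) points $\tau_{1},\tau_{2},\dots$, it closes easily: choosing $n\in\N$ with $n>\lambda$ gives
\begin{equation}
\ef_{x}(t,s)\geq\prod_{i=1}^{n}\big(1+x\mu(\tau_{i})\big)\geq x^{n}\prod_{i=1}^{n}\mu(\tau_{i}),\notag
\end{equation}
which dominates $x^{\lambda}$. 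If, however, $[s,t)_{\T}$ is a \emph{finite} set of $N$ points, then $\ef_{x}(t,s)=\prod_{i=1}^{N}(1+x\mu(\tau_{i}))$ is a polynomial of exact degree $N$ in $x$, so $x^{\lambda}\ef_{\ominus x}(t,s)$ tends to $\infty$ for $\lambda>N$ and to a positive constant for $\lambda=N$. Concretely, on $\T=\Z$ with $t=s+1$ and $\lambda=2$ one gets $x^{2}/(1+x)\to\infty$, not $0$. So the obstacle you flagged cannot be overcome: the lemma as stated is false for $t>s$ whenever $[s,t)_{\T}$ is finite and $\lambda\geq\#[s,t)_{\T}$.

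For comparison, the paper's own proof picks $n>\lambda$ and uses the Taylor lower bound $\ef_{x}(t,s)\geq x^{n}\hf_{n}(t,s)$; the concluding step tacitly assumes $\hf_{n}(t,s)>0$, which fails in exactly the regime above (on $\Z$ one has $\hf_{n}(t,s)=\binom{t-s}{n}=0$ whenever $0<t-s<n$). Your instinct that the purely scattered situation ``demands the most attention'' is therefore exactly right: it is where both your argument and the paper's break down, and a correct statement must either bound $\lambda$ by the number of points of $[s,t)_{\T}$ or assume that $[s,t)_{\T}$ is infinite whenever $t>s$.
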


\begin{proof}
As we will be considering the limit as $x\to\infty$, we may assume that $x\in\R^{+}$.
First, we consider the case $s,t\in\T$ with $t>s$.
We may find $n\in\N$ such that $n>\lambda$.
By the Taylor's formula, we have
\begin{equation}
\ef_{x}(t,s)=\sum_{\ell=0}^{n}x^{\ell}\hf_{\ell}(t,s)+x^{n+1}\int_{s}^{t}\hf_{n}(t,\sigma(\eta))\ef_{x}(\eta,s)\Delta\eta\geq{}x^{n}\hf_{n}(t,s).\notag
\end{equation}
Therefore, we see that
\begin{equation}
0\leq x^{\lambda}\ef_{\ominus{}x}(t,s)=\frac{x^{\lambda}}{\ef_{x}(t,s)}\leq\frac{x^{\lambda}}{x^{n}\hf_{n}(t,s)},\notag
\end{equation}
which proves $x^{\lambda}\ef_{\ominus{}x}(t,s)\to0$ by letting $x\to\infty$.
Next, let $s,t\in\T$ with $t\leq s$, then $\ef_{\ominus{}x}(t,s)=\ef_{x}(s,t)\geq1$ and thus we have $x^{\lambda}\ef_{\ominus{}x}(t,s)\geq{}x^{\lambda}$, which shows that $x^{\lambda}\ef_{\ominus{}x}(t,s)\to\infty$ as $x\to\infty$.
This completes the proof.
\end{proof}

Let us introduce the function $\Lambda:\creg{}(\T,\R)\times\T\times\T\to\R$ defined by
\begin{equation}
\Lambda(x;t,s):=\exp\big\{-x\ef_{\ominus{}x}(t,s)\big\}\quad\text{for}\ x\in\creg{}(\T,\R)\ \text{and}\ s,t\in\T.\label{aleq1}
\end{equation}

\begin{corollary}\label{alcrl2}
Let $s,t\in\T$, then
\begin{equation}
\lim_{x\to\infty}\Lambda(x;t,s)=\chi_{(-\infty,t)_{\T}}(s),\notag
\end{equation}
where $\chi_{D}:\R\to\{0,1\}$ is the characteristic function of the set $D\subset\R$.
\end{corollary}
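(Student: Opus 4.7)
The plan is to apply Lemma~\ref{allm2} with the specific choice $\lambda=1$ and then compose with the continuous exponential. The point is that $\Lambda(x;t,s)=\exp\{-x\,\ef_{\ominus x}(t,s)\}$ is the continuous image of the quantity $x\,\ef_{\ominus x}(t,s)$ whose asymptotic behaviour is already computed (dichotomously) in Lemma~\ref{allm2}.

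First I would note that it suffices to take $x\in\R^{+}$ and that for every such $x$ one has $\ominus x\in\creg{+}(\T,\R)$, so $\ef_{\ominus x}(t,s)>0$ for all $s,t\in\T$; hence the exponent $-x\,\ef_{\ominus x}(t,s)$ is non-positive, and the value of the limit is automatically in $[0,1]$. Then I would split into the two cases dictated by the characteristic function $\chi_{(-\infty,t)_{\T}}(s)$.

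In the case $t>s$, i.e.\ $s\in(-\infty,t)_{\T}$, Lemma~\ref{allm2} with $\lambda=1$ gives $x\,\ef_{\ominus x}(t,s)\to 0$ as $x\to\infty$; continuity of $\exp$ then yields
\begin{equation}
\lim_{x\to\infty}\Lambda(x;t,s)=\exp\{0\}=1=\chi_{(-\infty,t)_{\T}}(s).\notag
\end{equation}
In the complementary case $t\leq s$, the same lemma gives $x\,\ef_{\ominus x}(t,s)\to\infty$, so $-x\,\ef_{\ominus x}(t,s)\to-\infty$ and
\begin{equation}
\lim_{x\to\infty}\Lambda(x;t,s)=\lim_{y\to-\infty}\ef^{y}=0=\chi_{(-\infty,t)_{\T}}(s).\notag
\end{equation}

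There is essentially no obstacle here: the statement is an immediate corollary of Lemma~\ref{allm2}. The only point deserving a line of justification is the positivity of $\ef_{\ominus x}(t,s)$, which guarantees that one never has to deal with an indeterminate sign inside the exponential and that the convergence is genuinely to the stated values rather than along a subsequence.
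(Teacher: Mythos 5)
Your proof is correct and is exactly the argument the paper intends: the corollary is stated without proof as an immediate consequence of Lemma~\ref{allm2} (with $\lambda=1$) composed with the continuous map $y\mapsto\exp\{-y\}$, which is precisely what you do. Your extra remark on the positivity of $\ef_{\ominus x}(t,s)$ for $x\in\R^{+}$ is a harmless (and correct) bit of added care.
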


\section{Uniqueness of the Laplace Transform}\label{ult}

In this section, we shall always assume that $\sup\T=\infty$.
We first start with the definition of the set of null functions.

\begin{definition}\label{ultdf1}
A function $f:[s,\infty)_{\T}\to\C$ is called a \emph{null function} if
\begin{equation}
\int_{s}^{t}f(\eta)\Delta\eta=0\quad\text{for all}\ t\in[s,\infty)_{\T}.\notag
\end{equation}
The set of null functions on will be denoted by $\nf([s,\infty)_{\T},\C)$.
\end{definition}

Next, we give some properties of the null functions some of
which will be required in the proof of the main result.

\begin{lemma}\label{ultlm2}
Let $f\in\nf([s,\infty)_{\T},\C)$ and $g\in\crd{1}([s,\infty)_{\T},\C)$, then $fg^{\sigma}\in\nf([s,\infty)_{\T},\C)$.
\end{lemma}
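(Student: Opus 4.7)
The plan is to reduce the claim to a direct application of integration by parts on time scales. Define
\begin{equation*}
F(t):=\int_{s}^{t}f(\eta)\Delta\eta\quad\text{for}\ t\in[s,\infty)_{\T},
\end{equation*}
so that the null-function hypothesis on $f$ is exactly the statement $F\equiv0$ on $[s,\infty)_{\T}$; by the fundamental theorem of time-scale calculus one also has $F^{\Delta}=f$.

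Next, I would invoke the time-scale product rule $(Fg)^{\Delta}=F^{\Delta}g^{\sigma}+Fg^{\Delta}$ (valid because $F$ is $\Delta$-differentiable and $g\in\crd{1}$) and integrate it from $s$ to $t$, which yields
\begin{equation*}
F(t)g(t)-F(s)g(s)=\int_{s}^{t}f(\eta)g^{\sigma}(\eta)\Delta\eta+\int_{s}^{t}F(\eta)g^{\Delta}(\eta)\Delta\eta.
\end{equation*}
Since $F$ vanishes identically on $[s,\infty)_{\T}$, the boundary term is zero and the second integrand on the right-hand side is identically zero, so
\begin{equation*}
\int_{s}^{t}f(\eta)g^{\sigma}(\eta)\Delta\eta=0\quad\text{for every}\ t\in[s,\infty)_{\T},
\end{equation*}
which is precisely the statement $fg^{\sigma}\in\nf([s,\infty)_{\T},\C)$.

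The argument is essentially a two-line integration by parts, so there is no substantive obstacle. The only point that needs care is writing the product rule in the form with $\sigma$ on the correct factor, since this is exactly what brings $g^{\sigma}$ (rather than $g$) into the integrand and thereby produces the desired null-function $fg^{\sigma}$ instead of $fg$.
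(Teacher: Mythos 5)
Your proof is correct and is essentially the same as the paper's: the paper performs the integration by parts directly, writing $\int_{s}^{t}fg^{\sigma}\Delta\eta=\big[Fg\big]_{s}^{t}-\int_{s}^{t}Fg^{\Delta}\Delta\eta$ with $F(\eta)=\int_{s}^{\eta}f(\zeta)\Delta\zeta\equiv0$, which is exactly your integrated product rule. The only difference is presentational — you derive the parts formula from $(Fg)^{\Delta}=F^{\Delta}g^{\sigma}+Fg^{\Delta}$ rather than quoting it.
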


\begin{proof}
Performing an integration by parts, for any $t\in[s,\infty)_{\T}$, we have
\begin{equation}
\int_{s}^{t}f(\eta)g^{\sigma}(\eta)\Delta\eta=\Bigg[\bigg[\int_{s}^{\eta}f(\zeta)\Delta\zeta\bigg]g(\eta)\Bigg]_{\eta=s}^{\eta=t}-\int_{s}^{t}\bigg[\int_{s}^{\eta}f(\zeta)\Delta\zeta\bigg]g^{\Delta}(\eta)\Delta\eta=0,\notag
\end{equation}
which proves the claim.
\end{proof}

\begin{corollary}\label{ultcrl1}
Let $f\in\nf([s,\infty)_{\T},\C)$ and $g\in\reg{}([s,\infty)_{\T},\C)$, then $f\ef_{g}(\sigma(\cdot),s)\in\nf([s,\infty)_{\T},\C)$.
\end{corollary}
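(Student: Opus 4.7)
The plan is to reduce this corollary directly to Lemma \ref{ultlm2} by recognizing the factor $\ef_{g}(\sigma(\cdot),s)$ as the forward shift $h^{\sigma}$ of a suitable $h\in\crd{1}$.

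First I would define $h:[s,\infty)_{\T}\to\C$ by $h(t):=\ef_{g}(t,s)$. Since $g\in\reg{}([s,\infty)_{\T},\C)$, the time scale exponential $\ef_{g}(\cdot,s)$ exists and satisfies the dynamic equation $h^{\Delta}=g\,h$ with $h(s)=1$, so in particular $h^{\Delta}$ is rd-continuous on $[s,\infty)_{\T}$. Hence $h\in\crd{1}([s,\infty)_{\T},\C)$, and by construction $h^{\sigma}(t)=\ef_{g}(\sigma(t),s)$ for every $t\in[s,\infty)_{\T}$.

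With this observation, the claim is immediate: applying Lemma \ref{ultlm2} to $f\in\nf([s,\infty)_{\T},\C)$ and the above $h\in\crd{1}([s,\infty)_{\T},\C)$ yields
\begin{equation}
f\,\ef_{g}(\sigma(\cdot),s)=f\,h^{\sigma}\in\nf([s,\infty)_{\T},\C),\notag
\end{equation}
which is the desired conclusion.

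There is essentially no obstacle; the only subtlety is the bookkeeping point that a regressive rd-continuous $g$ produces an rd-continuously differentiable exponential, which is standard time scale calculus and is the reason the assumption $g\in\reg{}([s,\infty)_{\T},\C)$, rather than something stronger, suffices.
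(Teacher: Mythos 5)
Your argument is correct and is precisely the intended derivation: the paper states this corollary immediately after Lemma~\ref{ultlm2} with no separate proof, because taking $h=\ef_{g}(\cdot,s)$, noting $h^{\Delta}=gh$ is rd-continuous so $h\in\crd{1}([s,\infty)_{\T},\C)$, and applying the lemma to $fh^{\sigma}$ is exactly the step being left to the reader. Nothing to add.
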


\begin{corollary}\label{ultcrl1a}
Let $f\in\nf([s,\infty)_{\T},\C)$, then
\begin{equation}
\int_{s}^{\infty}f(\eta)\ef_{\ominus z}(\sigma(\eta),s)\Delta\eta=0\quad\text{for any}\ z\in\creg{}([s,\infty)_{\T},\C).\label{ultcrl1aeq1}
\end{equation}
\end{corollary}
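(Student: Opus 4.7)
The plan is to reduce this directly to Corollary~\ref{ultcrl1} together with the very definition of a null function. Indeed, the strength of the hypothesis ``$f$ is a null function'' is that the \emph{finite} integrals $\int_{s}^{t}f(\eta)\Delta\eta$ vanish identically in $t$, which is much stronger than simply saying the improper integral is zero.

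First I would observe that for any $z\in\creg{}([s,\infty)_{\T},\C)$, the constant function $\ominus z$ belongs to $\reg{}([s,\infty)_{\T},\C)$, so Corollary~\ref{ultcrl1} applies with $g=\ominus z$. This immediately yields that the product
\begin{equation}
F(\eta):=f(\eta)\ef_{\ominus z}(\sigma(\eta),s)\notag
\end{equation}
is itself a null function on $[s,\infty)_{\T}$. In particular,
\begin{equation}
\int_{s}^{t}f(\eta)\ef_{\ominus z}(\sigma(\eta),s)\Delta\eta=0\quad\text{for every}\ t\in[s,\infty)_{\T}.\notag
\end{equation}

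Finally, I would take $t\to\infty$ on both sides. Since the left side is the constant zero on $[s,\infty)_{\T}$, its limit exists and equals~$0$, and by the definition of the improper $\Delta$-integral this is exactly the improper integral appearing in~\eqref{ultcrl1aeq1}. There is no delicate estimation to carry out because no nontrivial tail behavior of $\ef_{\ominus z}(\sigma(\cdot),s)$ is ever needed; the only substantive step is the appeal to Corollary~\ref{ultcrl1}, which in turn rested on the integration-by-parts identity in Lemma~\ref{ultlm2}. There is essentially no obstacle, but one small point that deserves a sentence of justification is the verification that $\ominus z$ is again a regressive constant on $[s,\infty)_{\T}$, which is immediate from $1+(\ominus z)\mu(t)=1/(1+z\mu(t))$.
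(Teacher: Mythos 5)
Your argument is correct and is precisely the route the paper intends: Corollary~\ref{ultcrl1a} is stated without proof as an immediate consequence of Corollary~\ref{ultcrl1} applied with $g=\ominus z$ (which is regressive since $1+(\ominus z)\mu=1/(1+z\mu)\neq0$), followed by letting $t\to\infty$ in the vanishing finite integrals. The only cosmetic slip is calling $\ominus z$ a ``constant function''---as a function of $t$ it equals $-z/(1+z\mu(t))$ and need not be constant---but your own displayed identity shows you are treating it correctly as a regressive function.
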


We have now filled the necessary background for the proof of our main result.

\begin{theorem}[Lerch's theorem]\label{lerchthm}
Assume that $f:[s,\infty)_{\T}\to\C$, there exist an increasing divergent sequence $\{\varsigma_{k}\}_{k\in\N_{0}}\subset\R_{0}^{+}$ and $\alpha\in\creg{+}([s,\infty)_{\T},\C)$ such that
\begin{equation}
\lt{}\{f\ef_{\ominus(n\odot\varsigma_{k})}(\sigma(\cdot),s)\}(\alpha)=0\quad\text{for all}\ n,k\in\N_{0}.\label{lerchthmeq1}
\end{equation}
Then $f\in\nf([s,\infty)_{\T},\C)$.
\end{theorem}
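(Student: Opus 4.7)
The plan is to reduce the statement to a moment-vanishing argument in the spirit of the continuous Lerch theorem, replacing the Weierstrass-type density step by an explicit approximation of the indicator $\chi_{[t,\infty)_{\T}}$ built from the cut-off $\Lambda$ of~\eqref{aleq1}. First, I would absorb the Laplace-transform weight by setting $h(\eta):=f(\eta)\ef_{\ominus\alpha}(\sigma(\eta),s)$ and rewriting the hypothesis~\eqref{lerchthmeq1}, via $\ef_{\ominus(n\odot\varsigma_{k})}(\sigma(\eta),s)=[\ef_{\ominus\varsigma_{k}}(\sigma(\eta),s)]^{n}$, as
\begin{equation}
\int_{s}^{\infty}h(\eta)[\ef_{\ominus\varsigma_{k}}(\sigma(\eta),s)]^{n}\Delta\eta=0\quad\text{for all}\ n,k\in\N_{0}.\notag
\end{equation}
Since $u_{k}(\eta):=\ef_{\ominus\varsigma_{k}}(\sigma(\eta),s)\in(0,1]$ on $[s,\infty)_{\T}$, this says that every polynomial moment of $h$ in the bounded quantity $u_{k}$ vanishes.

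Next, fix $t\in[s,\infty)_{\T}$ and use the semigroup identity $\ef_{\ominus x}(\sigma(\eta),t)=\ef_{x}(t,s)\ef_{\ominus x}(\sigma(\eta),s)$ to recast
\begin{equation}
\Lambda(\varsigma_{k};\sigma(\eta),t)=\exp\big\{-c_{k}\ef_{\ominus\varsigma_{k}}(\sigma(\eta),s)\big\},\quad c_{k}:=\varsigma_{k}\ef_{\varsigma_{k}}(t,s)>0.\notag
\end{equation}
Expanding the exponential as a Taylor series in $u_{k}$ and integrating term by term---a swap I would justify by dominated convergence on partial sums, which are uniformly bounded by $\exp(c_{k})$ thanks to $u_{k}\le1$---the hypothesis gives
\begin{equation}
\int_{s}^{\infty}h(\eta)\Lambda(\varsigma_{k};\sigma(\eta),t)\Delta\eta=\sum_{n=0}^{\infty}\frac{(-c_{k})^{n}}{n!}\lt{}\{f\ef_{\ominus(n\odot\varsigma_{k})}(\sigma(\cdot),s)\}(\alpha)=0\notag
\end{equation}
for every $k\in\N_{0}$. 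Letting $k\to\infty$ and invoking Corollary~\ref{alcrl2}, the integrand converges pointwise in $\eta$ to $\chi_{(-\infty,\sigma(\eta))_{\T}}(t)$; a short case analysis on whether $t$ is right-dense or right-scattered shows that, viewed as a function of $\eta$, this limit agrees $\Delta$-almost everywhere with $\chi_{[t,\infty)_{\T}}(\eta)$. A second appeal to dominated convergence, with the bound $|\Lambda|\le1$ and dominating function $|h|$, then yields $\int_{t}^{\infty}h(\eta)\Delta\eta=0$ for every $t\in[s,\infty)_{\T}$.

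Finally, subtracting this identity at a generic $t$ from the same identity at $t=s$ produces $\int_{s}^{t}h(\eta)\Delta\eta=0$ for all $t\in[s,\infty)_{\T}$, so $h\in\nf([s,\infty)_{\T},\C)$. An application of Lemma~\ref{ultlm2} with $g(\eta):=\ef_{\alpha}(\eta,s)\in\crd{1}([s,\infty)_{\T},\C)$, whose forward shift satisfies $h(\eta)g^{\sigma}(\eta)=f(\eta)$, then delivers the desired $f\in\nf([s,\infty)_{\T},\C)$. The main obstacle I anticipate is the rigorous justification of the two limit interchanges above; both rest on a dominated-convergence statement for the $\Delta$-integral and so ultimately require the absolute integrability of $h$, which must be extracted from the standing hypothesis $\alpha\in\creg{+}$ together with the assumed convergence of the Laplace integrals in~\eqref{lerchthmeq1} (in particular the case $n=0$).
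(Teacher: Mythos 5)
Your overall skeleton is the same as the paper's: pass to $h=f\ef_{\ominus\alpha}(\sigma(\cdot),s)$, recognize the hypothesis as the vanishing of all moments in $u_{k}=\ef_{\ominus\varsigma_{k}}(\sigma(\cdot),s)$, sum the exponential series to get $\int_{s}^{\infty}h(\eta)\Lambda(\varsigma_{k};\sigma(\eta),t)\Delta\eta=0$, let $k\to\infty$ against the indicator limit of Corollary~\ref{alcrl2}, and finish with Lemma~\ref{ultlm2}/Corollary~\ref{ultcrl1}. The identity $hg^{\sigma}=f$ with $g=\ef_{\alpha}(\cdot,s)$ and the recasting $\Lambda(\varsigma_{k};\sigma(\eta),t)=\exp\{-c_{k}u_{k}(\eta)\}$ are both correct.

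The genuine gap is in your justification of the term-by-term integration over the \emph{infinite} interval, and you have in fact pointed at it yourself but proposed a repair that cannot work. You say the interchange "ultimately require[s] the absolute integrability of $h$, which must be extracted from \ldots the assumed convergence of the Laplace integrals." It cannot be extracted: the Laplace transform here is defined as an improper $\Delta$-integral that merely \emph{converges}, and convergence does not imply absolute convergence (already on $\T=\R$, functions like $f(t)=e^{\alpha t}\sin(e^{2t})$ have convergent but non-absolutely-convergent Laplace integrals). So neither your first interchange (sum versus $\int_{s}^{\infty}$, which needs $\int_{s}^{\infty}|h|e^{c_{k}}\Delta\eta<\infty$) nor your second (dominating $|h\Lambda|$ by $|h|$) is licensed by the hypotheses. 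This is precisely the point the paper's proof is built to circumvent: it introduces the tail primitive $h_{r}(t)=\int_{r}^{t}g(\eta)\Delta\eta$, uses the Cauchy criterion for the convergent improper integral to get a bound $|h_{r}|\leq M_{r}$ with $M_{r}\to0$, integrates by parts to convert $\int_{s}^{r}g(\eta)\ef_{\ominus(n\odot\varsigma_{k})}(\sigma(\eta),r)\Delta\eta$ into an integral against $h_{r}$ bounded by $M_{r}$, performs the series interchange only on the \emph{compact} interval $[s,r]_{\T}$ (where it is harmless), obtains the bound $M_{r}\exp\{\varsigma_{k}\ef_{\ominus\varsigma_{k}}(r,t)\}$, and only then lets $r\to\infty$. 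Some version of this Abel-summation device is indispensable; without it your argument only proves the theorem under the strictly stronger hypothesis that $f\ef_{\ominus\alpha}(\sigma(\cdot),s)$ is absolutely $\Delta$-integrable. (Your observation that the pointwise limit of $\Lambda(\varsigma_{k};\sigma(\eta),t)$ is the indicator of $\{\eta:\sigma(\eta)>t\}$, agreeing $\Delta$-a.e.\ with $\chi_{[t,\infty)_{\T}}$, is correct and, combined with the case $n=0$, yields the same conclusion as the paper's final step.)
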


\begin{proof}
Define the function $g:[s,\infty)_{\T}\to\C$ by $g(t):=f(t)\ef_{\ominus\alpha}(\sigma(t),s)$ for $t\in[s,\infty)_{\T}$, then we have
\begin{align}
\int_{s}^{\infty}g(\eta)\ef_{\ominus(n\odot\varsigma_{k})}(\sigma(\eta),s)\Delta\eta=&\int_{s}^{\infty}f(\eta)\ef_{\ominus(n\odot\varsigma_{k})}(\sigma(\eta),s)\ef_{\ominus\alpha}(\sigma(\eta),s)\Delta\eta\notag\\
=&\int_{s}^{\infty}f(\eta)\ef_{\ominus(\alpha\oplus(n\odot\varsigma_{k}))}(\sigma(\eta),s)\Delta\eta=0\label{lerchthmprfeq1}
\end{align}
for all $n,k\in\N_{0}$.
Let $r\in[s,\infty)_{\T}$, and define $h:[s,\infty)_{\T}\to\C$ by
\begin{equation}
h_{r}(t):=\int_{r}^{t}g(\eta)\Delta\eta\quad\text{for}\ t\in[s,\infty)_{\T}.\notag
\end{equation}
It follows from \eqref{lerchthmeq1} with $n=0$ that
\begin{equation}
\int_{s}^{\infty}g(\eta)\Delta\eta=0,\label{lerchthmprfeq2}
\end{equation}
which shows that $\lim_{t\to\infty}h_{r}(t)$ exists.
So we can find $M_{r}\in\R^{+}$ such that $|h_{r}(t)|\leq M_{r}$ for all $t\in[r,\infty)_{\T}$.
We may (and do) assume that $M_{r}\to0$ as $r\to\infty$.
Using \eqref{lerchthmprfeq1}, and performing integration by parts, we get
\begin{align}
\int_{s}^{r}g(\eta)\ef_{\ominus(n\odot\varsigma_{k})}(\sigma(\eta),s)\Delta\eta=&-\int_{r}^{\infty}g(\eta)\ef_{\ominus(n\odot\varsigma_{k})}(\sigma(\eta),s)\Delta\eta\notag\\
\begin{split}
=&-\bigg[\big[\ef_{\ominus(n\odot\varsigma_{k})}(\eta,s)h_{r}(\eta)\big]_{\eta=r}^{\eta\to\infty}\\
&-\int_{r}^{\infty}\ef_{\ominus(n\odot\varsigma_{k})}^{\Delta_{1}}(\eta,s)h_{r}(\eta)\Delta\eta\bigg]\\
\end{split}\notag\\
\begin{split}
=&-\bigg[\big[\big(\ef_{\ominus\varsigma_{k}}(\eta,s)\big)^{n}h_{r}(\eta)\big]_{\eta=r}^{\eta\to\infty}\\
&+\int_{r}^{\infty}(n\odot\varsigma_{k})(\eta)\ef_{\ominus(n\odot\varsigma_{k})}(\sigma(\eta),s)h_{r}(\eta)\Delta\eta\bigg]
\end{split}\notag\\
=&-\int_{r}^{\infty}(n\odot\varsigma_{k})(\eta)\ef_{\ominus(n\odot\varsigma_{k})}(\sigma(\eta),s)h_{r}(\eta)\Delta\eta\label{lerchthmprfeq3}
\end{align}
for all $n,k\in\N_{0}$.
Note that above have used Corollary~\ref{alcrl1} while passing to the last step.
Now multiplying both sides of \eqref{lerchthmprfeq3} with $\ef_{\ominus(n\odot\varsigma_{k})}(s,r)$, we have
\begin{equation}
\int_{s}^{r}g(\eta)\ef_{\ominus(n\odot\varsigma_{k})}(\sigma(\eta),r)\Delta\eta=-\int_{r}^{\infty}(n\odot\varsigma_{k})(\eta)\ef_{\ominus(n\odot\varsigma_{k})}(\sigma(\eta),r)h_{r}(\eta)\Delta\eta,\notag
\end{equation}
which yields
\begin{equation}
\bigg|\int_{s}^{r}g(\eta)\ef_{\ominus(n\odot\varsigma_{k})}(\sigma(\eta),r)\Delta\eta\bigg|\leq M_{r}\int_{r}^{\infty}(n\odot\varsigma_{k})(\eta)\ef_{\ominus(n\odot\varsigma_{k})}(\sigma(\eta),r)\Delta\eta=M_{r}.\notag
\end{equation}
By the series expansion of the exponential function, we know that
\begin{equation}
\sum_{\ell\in\N_{0}}\frac{(-1)^{\ell}\varsigma_{k}^{\ell}}{\ell!}\ef_{\ominus(\ell\odot\varsigma_{k})}(t,s)=\Lambda(\varsigma_{k};t,s)\quad\text{for}\ s,t\in\T\ \text{and}\ k\in\N_{0},\notag
\end{equation}
where $\Lambda$ is defined by \eqref{aleq1}.
Thus, for all $t\in[s,r)_{\T}$ and all $k\in\N_{0}$, we can estimate that
\begin{align}
\bigg|\int_{s}^{r}g(\eta)\Lambda(\varsigma_{k};\sigma(\eta),t)\Delta\eta\bigg|=&\bigg|\sum_{\ell\in\N_{0}}\frac{(-1)^{\ell-1}\varsigma_{k}^{\ell}}{\ell!}\int_{s}^{r}g(\eta)\ef_{\ominus(\ell\odot\varsigma_{k})}(\sigma(\eta),t)\Delta\eta\bigg|\notag\\
\leq&\sum_{\ell\in\N_{0}}\frac{\varsigma_{k}^{\ell}}{\ell!}\ef_{\ominus(\ell\odot\varsigma_{k})}(r,t)\bigg|\int_{s}^{r}g(\eta)\ef_{\ominus(\ell\odot\varsigma_{k})}(\sigma(\eta),r)\Delta\eta\bigg|\notag\\
\leq&M_{r}\sum_{\ell\in\N_{0}}\frac{\varsigma_{k}^{\ell}}{\ell!}\big(\ef_{\ominus\varsigma_{k}}(r,t)\big)^{\ell}=M_{r}\exp\big\{\varsigma_{k}\ef_{\ominus\varsigma_{k}}(r,t)\big\}.\notag
\end{align}
Letting $r\to\infty$, we have $M_{r}\to0$ and $\ef_{\ominus\varsigma_{k}}(r,t)\to0$ by Corollary~\ref{alcrl1}, which yields $M_{r}\exp\{\varsigma_{k}\ef_{\ominus\varsigma_{k}}(r,t)\}\to0$ as $r\to\infty$.
We can therefore write
\begin{equation}
\int_{s}^{\infty}g(\eta)\Lambda(\varsigma_{k};\sigma(\eta),t)\Delta\eta=0\quad\text{for all}\ k\in\N_{0}.\label{lerchthmprfeq4}
\end{equation}
By \eqref{lerchthmprfeq2}, the function $g$ is integrable over $[s,\infty)_{\T}$ and the characteristic function $\chi$ is piecewise constant.
Letting $k\to\infty$ in \eqref{lerchthmprfeq4}, we get
\begin{equation}
\int_{s}^{\infty}g(\eta)\chi_{(-\infty,t)_{\T}}(\sigma(\eta))\Delta\eta=0\quad\text{for all}\ t\in[s,\infty)_{\T}\label{lerchthmprfeq5}
\end{equation}
by Lebesque's dominated convergence theorem and Corollary~\ref{alcrl2}.
Now, we are in a position to prove that
\begin{equation}
\int_{s}^{t}g(\eta)\Delta\eta=0\quad\text{for all}\ t\in[s,\infty)_{\T}.\label{lerchthmprfeq6}
\end{equation}
From \eqref{lerchthmprfeq5}, for all $t\in[s,\infty)_{\T}$, we have
\begin{align}
\int_{s}^{\infty}g(\eta)\chi_{(-\infty,t)_{\T}}(\sigma(\eta))\Delta\eta=&\int_{s}^{t}g(\eta)\chi_{[s,t)_{\T}}(\sigma(\eta))\Delta\eta\notag\\
=&\int_{s}^{t}g(\eta)\big[\chi_{[s,t)_{\T}}(\eta)+\mu(\eta)\chi_{[s,t)_{\T}}^{\Delta}(\eta)\big]\Delta\eta\notag\\
=&\int_{s}^{t}g(\eta)\chi_{[s,t)_{\T}}(\eta)\Delta\eta,\notag
\end{align}
which together with the definition of the characteristic function $\chi$ and \eqref{lerchthmprfeq5} gives \eqref{lerchthmprfeq6}.
Therefore, we learn that $g$ is a null function.
An application of Corollary~\ref{ultcrl1} shows that $f=g\ef_{\alpha}(\sigma(\cdot),s)$ is a null function too.
This completes the proof.
\end{proof}

\begin{corollary}
Assume that $f,g:[s,\infty)_{\T}\to\C$, there exist an increasing divergent sequence $\{\varsigma_{k}\}_{k\in\N_{0}}\subset\R^{+}$ and $\alpha\in\creg{+}([s,\infty)_{\T},\C)$ such that
\begin{equation}
\lt{}\{f\ef_{\ominus(n\odot\varsigma_{k})}(\sigma(\cdot),s)\}(\alpha)=\lt{}\{g\ef_{\ominus(n\odot\varsigma_{k})}(\sigma(\cdot),s)\}(\alpha)\quad\text{for all}\ n,k\in\N_{0}.\notag
\end{equation}
Then $f-g\in\nf([s,\infty)_{\T},\C)$.
\end{corollary}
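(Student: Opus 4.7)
The plan is to reduce the statement directly to Theorem~\ref{lerchthm} by exploiting the linearity of the Laplace transform. Set $h:=f-g$ on $[s,\infty)_{\T}$. For each $n,k\in\N_{0}$, the pointwise identity
\[
h(\eta)\ef_{\ominus(n\odot\varsigma_{k})}(\sigma(\eta),s)=f(\eta)\ef_{\ominus(n\odot\varsigma_{k})}(\sigma(\eta),s)-g(\eta)\ef_{\ominus(n\odot\varsigma_{k})}(\sigma(\eta),s)
\]
holds by the distributivity of multiplication. Since the hypothesis asserts that both Laplace transforms on the right exist at $\alpha$ and are equal, the improper $\Delta$-integral defining $\lt{}\{h\ef_{\ominus(n\odot\varsigma_{k})}(\sigma(\cdot),s)\}(\alpha)$ also converges and equals $0$ by the linearity of the improper $\Delta$-integral.

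Next, since $\R^{+}\subset\R_{0}^{+}$, the increasing divergent sequence $\{\varsigma_{k}\}_{k\in\N_{0}}\subset\R^{+}$ supplied by the corollary's hypothesis also satisfies the containment requirement of Theorem~\ref{lerchthm}. Applying Theorem~\ref{lerchthm} to $h$ with this same sequence $\{\varsigma_{k}\}$ and the same $\alpha\in\creg{+}([s,\infty)_{\T},\C)$ therefore yields $h=f-g\in\nf([s,\infty)_{\T},\C)$, which is the desired conclusion.

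Honestly, there is no real obstacle here: the entire content of the corollary is that Lerch's theorem is linear in $f$, and the only point worth checking is that the individual convergence of each Laplace transform at $\alpha$ (implicit in the hypothesis) legitimately permits splitting the integral of the difference into a difference of integrals. Once that routine step is noted, the corollary follows by direct invocation of Theorem~\ref{lerchthm}.
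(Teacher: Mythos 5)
Your proof is correct and matches the paper's (implicit) argument: the corollary is stated in the paper without proof precisely because it follows from Theorem~\ref{lerchthm} by the linearity of the Laplace transform applied to $f-g$, exactly as you argue. Your remark that $\R^{+}\subset\R_{0}^{+}$ reconciles the sequence hypothesis with that of the theorem, and nothing further is needed.
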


\begin{corollary}
Assume that the graininess function $\mu$ is constant and there exists $\alpha\in\creg{+}([s,\infty)_{\T},\C)$ such that
\begin{equation}
\lt{}\{f\}(z)=0\quad\text{for all}\ z\in\C_{\mu_{\ast}(s)}(\alpha).\notag
\end{equation}
Then $f\in\nf([s,\infty)_{\T},\C)$.
\end{corollary}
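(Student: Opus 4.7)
The plan is to deduce the corollary from Theorem~\ref{lerchthm} by supplying a suitable parameter (which I will call $\beta$) and a sequence $\{\varsigma_{k}\}$ distilled from the half-plane $\C_{\mu_{\ast}(s)}(\alpha)$ on which $\lt{}\{f\}$ vanishes. The constancy of $\mu$ enters in only one spot: it makes the Hilger ring operations $\oplus$ and $\odot$ global and explicit, so that all the shifted parameters $\beta\oplus(n\odot\varsigma_{k})$ can be kept inside the half-plane uniformly in $n,k\in\N_{0}$.

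First I would record the translation identity: by the semigroup law $\ef_{\ominus\gamma}(\sigma(\eta),s)\ef_{\ominus\delta}(\sigma(\eta),s)=\ef_{\ominus(\gamma\oplus\delta)}(\sigma(\eta),s)$ applied inside the Laplace integral,
\begin{equation}
\lt{}\{f\ef_{\ominus(n\odot\varsigma_{k})}(\sigma(\cdot),s)\}(\beta)=\lt{}\{f\}\big(\beta\oplus(n\odot\varsigma_{k})\big).\notag
\end{equation}
Consequently, it suffices to produce $\beta\in\creg{+}([s,\infty)_{\T},\C)$ and an increasing divergent $\{\varsigma_{k}\}\subset\R_{0}^{+}$ such that $\beta\oplus(n\odot\varsigma_{k})\in\C_{\mu_{\ast}(s)}(\alpha)$ for every $n,k\in\N_{0}$; the hypothesis then kills the right-hand side for all $n,k$, and Theorem~\ref{lerchthm} delivers $f\in\nf([s,\infty)_{\T},\C)$.

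For the concrete choice I would pick any real $\beta>\alpha$ lying in $\creg{+}$ --- which exists because, for constant $\mu$, $\creg{+}$ is either $\R$ (if $\mu=0$) or $(-1/\mu,\infty)$ (if $\mu>0$) --- and set, say, $\varsigma_{k}:=k+1$. Constancy of $\mu$ makes $n\odot\varsigma_{k}$ a non-negative real independent of the base point: namely, $n\varsigma_{k}$ when $\mu=0$, and $\bigl((1+\mu\varsigma_{k})^{n}-1\bigr)/\mu$ when $\mu>0$. Hence $\beta\oplus(n\odot\varsigma_{k})=\beta+(n\odot\varsigma_{k})(1+\beta\mu)\geq\beta$, and since $\Rl_{\mu_{\ast}(s)}$ acts as the identity on reals in $\creg{+}$, the desired inclusion $\Rl_{\mu_{\ast}(s)}\bigl(\beta\oplus(n\odot\varsigma_{k})\bigr)\geq\beta>\alpha$ is immediate.

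The main --- and essentially only --- point of care is executing this chain of inequalities in both subcases $\mu=0$ and $\mu>0$ in parallel and checking that $\beta$ and all of the shifted points $\beta\oplus(n\odot\varsigma_{k})$ sit simultaneously in $\creg{+}$ and in the open half-plane $\C_{\mu_{\ast}(s)}(\alpha)$. No deeper obstacle is expected: the analytic content has been fully absorbed by Lerch's theorem, and the corollary amounts to an algebraic translation of a half-plane vanishing hypothesis into the sequential vanishing demanded by~\eqref{lerchthmeq1}.
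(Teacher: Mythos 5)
Your proposal is correct and follows essentially the same route as the paper: pick a real $\beta$ in the half-plane, use the translation identity $\lt{}\{f\ef_{\ominus(n\odot\varsigma_{k})}(\sigma(\cdot),s)\}(\beta)=\lt{}\{f\}(\beta\oplus(n\odot\varsigma_{k}))$, check that the shifted parameters stay in $\C_{\mu_{\ast}(s)}(\alpha)$, and invoke Theorem~\ref{lerchthm}. The only cosmetic difference is your choice $\varsigma_{k}:=k+1$ where the paper takes $\varsigma_{k}:=k\odot\varsigma$; your verification that $\beta\oplus(n\odot\varsigma_{k})\geq\beta>\alpha$ under constant graininess is the same computation the paper leaves implicit.
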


\begin{proof}
In this case, for any fixed $\beta\in\R_{\mu_{\ast}(s)}(\alpha)\subset\C_{\mu_{\ast}(s)}(\alpha)$, we have
\begin{equation}
\lt{}\{f\}(\beta)=0,\notag
\end{equation}
which yields $\beta\oplus((nk)\odot\varsigma)\in\R_{\mu_{\ast}(s)}(\alpha)\subset\C_{\mu_{\ast}(s)}(\alpha)$ for all $n,k\in\N_{0}$ and all $t\in[s,\infty)_{\T}$, where $\varsigma\in\R^{+}$, i.e.,
\begin{equation}
\lt{}\{f\}(\beta\oplus((nk)\odot\varsigma))=\lt{}\{f\ef_{\ominus((nk)\odot\varsigma)}(\sigma(\cdot),s)\}(\beta)=0\quad\text{for all}\ n,k\in\N_{0}.\notag
\end{equation}
This shows that the conditions of Theorem~\ref{lerchthm} hold with $\varsigma_{k}:=k\odot\varsigma$ for $k\in\N_{0}$.
\end{proof}

\section{Appendix: Time Scales Essentials}\label{pts}

A \emph{time scale}, which inherits the standard topology on $\R$,
is a nonempty closed subset of reals.
Throughout this paper, the time scale is assumed to be unbounded above and
will be denoted by the symbol $\T$,
and the intervals with a subscript $\T$ are used to denote
the intersection of the usual interval with $\T$.
For $t\in\T$, we define the \emph{forward jump operator}
$\sigma:\T\to\T$ by $\sigma(t):=\inf(t,\infty)_{\T}$
while the \emph{graininess function}
$\mu:\T\to\R_{0}^{+}$ is defined to be $\mu(t):=\sigma(t)-t$.
A point $t\in\T$ is called \emph{right-dense} if $\sigma(t)=t$;
otherwise, it is called \emph{right-scattered},
and similarly \emph{left-dense} and \emph{left-scattered} points
are defined in terms of the so-called backward jump operator.
A function $f:\T\to\C$ is said to be \emph{Hilger differentiable}
(or $\Delta$-differentiable) at the point $t\in\T$ if there exists $\ell\in\C$ such that for any
$\varepsilon>0$ there exists a neighborhood $U$ of $t$ such that
\begin{equation}
\big|[f(\sigma(t))-f(s)]-\ell[\sigma(t)-s]\big|\leq\varepsilon|\sigma(t)-s|\quad\text{for all}\ s\in U,\notag
\end{equation}
and in this case we denote $f^{\Delta}(t)=\ell$.
A function $f$ is called \emph{rd-continuous}
provided that it is continuous at right-dense points in $\T$,
and has finite limits at left-dense points,
and the set of rd-continuous functions is denoted by $\crd{}(\T,\C)$.
The set of functions $\crd{1}(\T,\C)$ includes the functions whose
derivative is in $\crd{}(\T,\C)$ too.
For $f\in\crd{1}(\T,\C)$, we have
\begin{equation}
f^{\sigma}=f+\mu f^{\Delta}\quad\text{on}\ \T^{\kappa},\notag
\end{equation}
where $f^{\sigma}:=f\circ\sigma$ and $\T^{\kappa}:=\T\backslash\{\sup\T\}$ if $\sup\T=\max\T$ and satisfies $\rho(\max\T)\neq\max\T$; otherwise, $\T^{\kappa}:=\T$.
For $s,t\in\T$ and a function $f\in\crd{}(\T,\C)$,
the $\Delta$-integral of $f$ is defined by
\begin{equation}
\int_{s}^{t}f(\eta)\Delta\eta=F(t)-F(s)\quad\text{for}\ s,t\in\T,\notag
\end{equation}
where $F:\T\to\C$ is an antiderivative of $f$,
i.e., $F^{\Delta}=f$ on $\T^{\kappa}$.

A function $f\in\crd{}(\T,\C)$
is called \emph{regressive} if $1+\mu f\neq0$ on $\T$,
and \emph{positively regressive} if it is real valued and $1+\mu f>0$ on $\T$.
The set of regressive functions and the set of positively regressive functions
are denoted by $\reg{}(\T,\C)$ and $\reg{+}(\T,\R)$, respectively,
and $\reg{-}(\T,\R)$ is defined similarly.
For simplicity, we denote by $\creg{}(\T,\C)$ the set of complex regressive constants,
and similarly, we define the sets $\creg{+}(\T,\R)$ and $\creg{-}(\T,\R)$.

Let $f\in\reg{}(\T,\C)$.
Then the \emph{exponential function} $\ef_{f}(\cdot,s)$ is defined
to be the unique solution of the initial value problem
\begin{equation}
\begin{cases}
x^{\Delta}=fx\quad\text{on}\ \T^{\kappa}\\
x(s)=1
\end{cases}\notag
\end{equation}
for some fixed $s\in\T$.
For $h>0$, set
\begin{equation}
\C_{h}:=\big\{z\in\C:\ z\neq-1/h\big\}\quad\text{and}\quad\Z_{h}:=\big\{z\in\C:\-\pi/h<\Img(z)\leq\pi/h\big\},\notag
\end{equation}
and $\C_{0}:=\Z_{0}:=\C$.
For $h\in\R_{0}^{+}$, the Hilger \emph{real part} and \emph{imaginary part}
of a complex number are given by
\begin{equation}
\Rl_{h}(z):=\lim_{\nu\to h}\frac{1}{\nu}\big(|1+\nu z|-1\big)\quad\text{and}\quad\Img_{h}(z):=\lim_{\nu\to h}\frac{1}{\nu}\Arg(1+\nu z),\notag
\end{equation}
respectively, where $\Arg$ denotes the principle argument function,
i.e., $\Arg:\C\to(-\pi,\pi]_{\R}$.
For $h\in\R_{0}^{+}$ and any fixed $z\in\C_{h}$, the Hilger real part $\Rl_{h}(z)$ is a nondecreasing function of $h\in\R_{0}^{+}$, i.e., $\Rl_{h_{1}}(z)\geq\Rl_{h_{2}}(z)$ for $h_{1},h_{2}\in\R_{0}^{+}$ with $h_{1}\geq h_{2}$.
For $h\in\R_{0}^{+}$, we define the \emph{cylinder transformation}
$\xi_{h}:\C_{h}\to\Z_{h}$ by
\begin{equation}
\xi_{h}(z):=\lim_{\nu\to h}\frac{1}{\nu}\Log(1+\nu z)\quad\text{for}\ z\in\C_{h}.\notag
\end{equation}
Then the exponential function can also be written in the form
\begin{equation}
\ef_{f}(t,s):=\exp\bigg\{\int_{s}^{t}\xi_{\mu(\eta)}\big(f(\eta)\big)\Delta\eta\bigg\}\quad\text{for}\ s,t\in\T.\notag
\end{equation}

It is known that the exponential function $\ef_{f}(\cdot,s)$ is
strictly positive on $[s,\infty)_{\T}$
provided that $f\in\reg{+}([s,\infty)_{\T},\R)$,
while $\ef_{f}(\cdot,s)$ alternates in sign at
right-scattered points of the interval $[s,\infty)_{\T}$
provided that $f\in\reg{-}([s,\infty)_{\T},\R)$.
For $h\in\R_{0}^{+}$ and $w,z\in\C_{h}$, the \emph{circle plus} and the \emph{circle minus}
are defined by
\begin{equation}
z\oplus_{h}w:=z+w+hzw\quad\text{and}\quad z\ominus_{\mu}w:=\frac{z-w}{1+hw},\notag
\end{equation}
respectively. It is known that $(\reg{}(\T,\C),\oplus_{\mu})$ is a group,
and the inverse of $f\in\reg{}(\T,\C)$ is $\ominus_{\mu}f:=0\ominus_{\mu}f$.
Moreover, $\creg{+}(\T,\C)$ is a subgroup of $\creg{}(\T,\C)$.
For $\lambda\in\C$ and $z\in\C_{h}$, the \emph{circle dot} is defined by
\begin{equation}
\lambda\odot_{h}z:=\lim_{\nu\to h}\frac{1}{\nu}\big((1+\nu z)^{\lambda}-1\big).\notag
\end{equation}
With this multiplication, $(\reg{}(\T,\C),\oplus_{\mu},\odot_{\mu})$ becomes a complex vector space.
It should be noted that
\begin{equation}
\ef_{\lambda\odot_{\mu}f}(t,s)=\big(\ef_{f}(t,s)\big)^{\lambda}\quad\text{for}\ s,t\in\T,\notag
\end{equation}
where $\lambda\in\C$ and $f\in\reg{}(\T,\C)$.
For simplicity in the notation, we shall use $\oplus,\ominus$ and $\odot$ instead of $\oplus_{\mu},\ominus_{\mu}$ and $\odot_{\mu}$, respectively.

The definition of the generalized monomials on time scales
(see \cite[\S~1.6]{MR1843232}) $\hf_{n}:\T\times\T\to\R$ is given as
\begin{equation}
\hf_{n}(t,s):=
\begin{cases}
1,&n=0,\\
\displaystyle\int_{s}^{t}\hf_{n-1}(\eta,s)\Delta\eta,&n\in\N
\end{cases}\quad\text{for}\ s,t\in\T.\notag
\end{equation}
Using induction, it is easy to see that
$\hf_{n}(t,s)\geq0$ holds for all $n\in\N_{0}$
and all $s,t\in\T$ with $t\geq s$,
and $(-1)^{n}\hf_{n}(t,s)\geq 0$ holds for all $n\in\N$
and all $s,t\in\T$ with $t\leq s$.

The readers are referred to \cite{MR1843232}
for fundamentals of time scale theory.

\end{document}